\title{A PDE perspective on the flat chain conjecture}
\author{Andrea Marchese \\ University of Trento}
\date{}
\theoremstyle{definition}
\newtheorem{definition}{Definition}[section]
\newtheorem{theorem}{Theorem}[section]
\newtheorem{conjecture}{Conjecture}[section]
\begin{document}

\maketitle

\begin{abstract}
This survey summarizes recent progress on the flat chain conjecture, which asserts the equivalence between metric currents and flat chains with finite mass in the Euclidean space. In particular, we focus on recent work showing that the conjecture is equivalent to a Lipschitz regularity estimate for a certain PDE.
\end{abstract}

\section{Introduction}

The study of \emph{currents}, that is, generalized surfaces arising as duals to smooth differential forms, has been central to geometric measure theory since the foundational work of Federer and Fleming, see \cite{Federer1969}. Two classes of currents are particularly significant: \textbf{normal currents}, which have finite mass and finite boundary mass, and \textbf{flat chains}, which are limits of normal currents in the flat norm. A long-standing problem, known as the \textbf{flat chain conjecture}, asks whether every \textbf{metric current} (in the sense of Ambrosio–Kirchheim \cite{AmbrosioKirchheim2000}) with compact support in \(\mathbb{R}^d\) corresponds to a Federer–Fleming flat chain.

The conjecture was first proved for $k=1$ by Schioppa \cite{Schioppa2016} and for $k=d$ by De Philippis and Rindler \cite{DePhilippisRindler2016}. It remains open for intermediate dimensions \(1 < k < d\). In this survey, we summarize recent progress on this problem.




\section*{Acknoweldgments}
The author is partially supported by the PRIN project 2022PJ9EFL "Geometric Measure Theory: Structure of Singular Measures, Regularity Theory and Applications in the Calculus of Variations" CUP:E53D23005860006 and by GNAMPA-INdAM.

\section{Notation and preliminaries}

\subsection{Currents, mass, and flat norm}

We begin with the classical notions and notation for currents in the Euclidean space \(\mathbb{R}^d\), see \cite{Federer1969} for more details. Let $\mathscr{D}^k(\mathbb{R}^d)$ denote the space of smooth, compactly supported $k$-forms, endowed with the standard topology of test functions.

\begin{definition}[Current]
    A \(k\)-dimensional \textbf{current} \(T\) is a continuous linear functional on $\mathscr{D}^k(\mathbb{R}^d)$. The space of $k$-currents is denoted by $\mathscr{D}_k$.
\end{definition}

\begin{definition}[Mass]
    The \textbf{mass} of a current \(T\) is defined as:
    \[
    \mathbb{M}(T) = \sup \left\{ \langle T, \omega \rangle : \|\omega\|_{\infty} \leq 1 \right\},
    \]
    where $\|\omega\|_{\infty}$ is the comass norm.
\end{definition}

\begin{definition}[Boundary]
    The \textbf{boundary} of a \(k\)-current \(T\) is the \((k-1)\)-current \(\partial T\) defined by:
    \[
    \langle \partial T, \omega \rangle = \langle T, d\omega \rangle \quad \text{for all } \omega \in \mathscr{D}^{k-1}(\mathbb{R}^d).
    \]
\end{definition}

\begin{definition}[Normal current]
    A current \(T\) is \textbf{normal} if both \(\mathbb{M}(T) < \infty\) and \(\mathbb{M}(\partial T) < \infty\). The space of normal $k$-currents is denoted by $\mathscr{N}_k$.
\end{definition}

\begin{definition}[Flat norm, see \S 4.1.12 of \cite{Federer1969}]
    The \textbf{flat norm} of a current \(T\) can be defined in two equivalent ways:
    \begin{align*}
    \mathbb{F}(T) &= \inf \left\{ \mathbb{M}(R) + \mathbb{M}(S) : T = R + \partial S,\ R \in \mathscr{D}_k,\ S \in \mathscr{D}_{k+1} \right\} \\
    &= \sup \left\{ \langle T, \omega \rangle : \|\omega\|_\infty \leq 1, \|d\omega\|_\infty \leq 1 \right\}.
    \end{align*}
    A current is a \textbf{flat chain} if it is the limit of normal currents in the flat norm. The space of flat $k$-chains of finite mass is denoted by $\mathscr{F}_k$.
\end{definition}

In \cite{AlbertiMarchese2023}, Alberti and the author proved via an iteration of the standard \emph{polyhedral approximation theorem} that flat chains with finite mass are simply measurable pieces of normal currents.

\begin{theorem}[Structure of flat chains, see Theorem 1.1 of \cite{AlbertiMarchese2023}]\label{t:str_flat}

Let \(1 \leq k < d\), and let \(T \in \mathscr{F}_{k}(K)\) with \(\mathbb{M}(T)<\infty\). For every \(\varepsilon>0\) there exists a normal current \(T' \in \mathscr{N}_{k}(K)\) and a Borel set \(E\subset K\) such that
\begin{itemize}
    \item[(i)] \(\partial T'=0\),
    \item[(ii)] \(T = T' \llcorner E\), where $\llcorner$ denotes the restriction of the current to the set $E$,
    \item[(iii)] \(\mathbb{M}(T') \leq (2+\varepsilon)\,\mathbb{M}(T)\).
\end{itemize}
\end{theorem}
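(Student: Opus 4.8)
The plan is to realize $T'$ as $T$ together with a single, cheaply produced ``parallel copy'' of $T$ carrying the opposite orientation, arranged so that the two boundaries cancel. Concretely, I would first combine the polyhedral approximation theorem with the second (dual) description of the flat norm to pass to a polyhedral model: approximate $T$ in flat norm by a polyhedral $k$-chain $P$ with $\mathbb{M}(P)\le(1+\varepsilon)\mathbb{M}(T)$, and write the flat-small remainder as $T-P=R+\partial S$ with $\mathbb{M}(R)+\mathbb{M}(S)$ as small as desired and $R,S$ polyhedral. The point of this reduction is that $\partial P$ is supported on a $(k-1)$-dimensional skeleton, hence on a genuinely lower-dimensional, $\|T\|$-null set; this is precisely the feature that fails for a general flat chain, whose boundary $\partial T$ may be spread over all of $\mathrm{supp}(T)$.

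The core construction is then a \emph{doubling}. I would choose a bi-Lipschitz map $\phi\colon K\to K$ with $\mathrm{Lip}(\phi)\le(1+\varepsilon)^{1/k}$ that equals the identity on $\mathrm{supp}(\partial P)$ (and on the small set $\mathrm{supp}(\partial R)$) but displaces the rest of $\mathrm{supp}(P)$ off itself, and set $C:=-\phi_{\#}P$. Because $\phi$ pins the skeleton carrying $\partial P$, one has $\partial C=-\phi_{\#}\partial P=-\partial P$, so $Z:=P+C=P-\phi_{\#}P$ is a polyhedral cycle; this is the higher-dimensional analogue of closing an arc into a thin bigon by pushing it sideways while pinning its endpoints. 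By the near-isometry of $\phi$ one gets $\mathbb{M}(C)\le(1+\varepsilon)\mathbb{M}(P)$, whence $\mathbb{M}(Z)\le(2+\varepsilon')\mathbb{M}(T)$, and since $\phi$ moves the interior of $P$ off itself we obtain $P=Z\llcorner E_0$ for the Borel set $E_0=\mathrm{supp}(P)\setminus\mathrm{supp}(\phi_{\#}P)$. This already yields analogues of (i) and (iii) for the model $P$, with the cap $Z\llcorner E_0^{c}$ accounting for the extra factor of one.

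It remains to upgrade the flat-approximate identity $P=Z\llcorner E_0$ to the \emph{exact} identity $T=T'\llcorner E$ demanded by (ii), and this is where I expect the real difficulty to lie. The plan is to iterate: apply the same doubling to the flat-small correction $R+\partial S$ on a fresh region disjoint from $E_0$, then to the next correction, and so on, choosing the successive approximations so that the \emph{additional} mass introduced at stage $n$ decays geometrically and sums to at most $\varepsilon\,\mathbb{M}(T)$; the normal cycle $T'$ is the resulting mass-convergent limit, $E$ is the increasing union of the regions $E_n$, and (i) passes to the limit because every stage is boundaryless. The delicate term is $\partial S$: although $\mathbb{M}(S)$ is small, $\mathbb{M}(\partial S)$ is not controlled, so one cannot simply adjoin $\partial S$ (which is itself a cycle) without destroying the mass bound. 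The resolution I would pursue is to reroute this boundary through the thin slab swept out by $\phi$, paying $\mathbb{M}(S)$ rather than $\mathbb{M}(\partial S)$ by inserting the $(k+1)$-current $S$ into the homotopy, and then to check that this correction both preserves the cycle condition and contributes only to the geometrically summable error. Making this rerouting precise, while keeping every stage supported in $K$ and each cap disjoint from the previously fixed set, is the main obstacle and the true content of iterating the polyhedral approximation theorem.
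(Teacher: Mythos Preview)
The paper itself does not prove this statement; it attributes the result to \cite{AlbertiMarchese2023} and records only that it is obtained ``via an iteration of the standard polyhedral approximation theorem.'' Your overall plan---approximate by a polyhedral chain, embed that chain in a cycle of roughly twice the mass, then iterate on the remainder---is therefore consistent with what the paper reports, and the doubling device you propose (push $P$ off itself by a near-isometry pinning $\operatorname{supp}(\partial P)$) is a legitimate way to manufacture the cycle at the polyhedral stage. Some care is needed to build a single global $\phi$ when the faces of $P$ share skeleta, and to keep everything inside the fixed compact $K$, but these are not the real obstacles.

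The genuine gap is exactly where you locate it, and your proposed fix does not close it. After one step you have $T = Z\llcorner E_0 + R + \partial S$ with $\mathbb{M}(R)+\mathbb{M}(S)$ small, but neither $\mathbb{M}(\partial S)$ nor $\mathbb{M}(T-P)$ is small, so you can neither iterate the doubling on the full remainder $T-P$ and sum the masses, nor adjoin the cycle $\partial S$ to $T'$ without destroying (iii). Your ``rerouting through the thin slab swept out by $\phi$'' does not do what you want: the homotopy $H$ from $\mathrm{id}$ to $\phi$ yields a $(k{+}1)$-current with $\partial H(P)=P-\phi_\sharp P - H(\partial P)$, and inserting the \emph{unrelated} chain $S$ into this identity does not convert the cost $\mathbb{M}(\partial S)$ into $\mathbb{M}(S)$---the boundary that must be absorbed is $\partial S$, not $\partial P$, and $S$ lives nowhere near the slab. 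Organising the iteration so that these terms never have to be capped separately, and so that the total added mass telescopes to $(2+\varepsilon)\mathbb{M}(T)$, is precisely the content of the argument in \cite{AlbertiMarchese2023}; your sketch does not yet contain the mechanism that makes the masses sum.
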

This result is the starting point of some recent work on the flat chain conjecture as it allows one to transfer properties of normal currents to flat chains.

\subsection{Metric currents}

Ambrosio and Kirchheim \cite{AmbrosioKirchheim2000} extended the theory of currents to complete metric spaces. Let $\text{Lip}(X)$ denote the space of real-valued Lipschitz functions on $X$, and $\text{Lip}_b(X)$ the subspace of bounded Lipschitz functions.

\begin{definition}[Metric current \cite{AmbrosioKirchheim2000}]
    A \(k\)-dimensional \textbf{metric current} \(T\) on a metric space \(X\) is a multilinear functional
    \[
    T: \text{Lip}_b(X) \times [\text{Lip}(X)]^k \to \mathbb{R}
    \]
    satisfying:
    \begin{enumerate}
        \item \textbf{Continuity}: \(T(f, \pi_1, \dots, \pi_k)\) is continuous under pointwise convergence of \(\pi_i\) with uniformly bounded Lipschitz constants.
        \item \textbf{Locality}: \(T(f, \pi_1, \dots, \pi_k) = 0\) if some \(\pi_i\) is constant on a neighborhood of \(\operatorname{supp}(f)\).
        \item \textbf{Finite mass}: There exists a finite measure $\mu$ such that
        \[
        |T(f, \pi_1, \dots, \pi_k)| \leq \prod_{i=1}^k \text{Lip}(\pi_i) \cdot \int |f| \, d\mu.
        \]
    \end{enumerate}
\end{definition}

In \(\mathbb{R}^d\), every metric current $T$ induces a classical current $\widetilde{T}$ via the formula \cite[Theorem 11.1]{AmbrosioKirchheim2000}:
\[
\langle \widetilde{T}, \omega \rangle = \sum_{\mathbf{i} \in I(d,k)} T(\omega_{\mathbf{i}}, x_{i_1}, \ldots, x_{i_k}),
\]
where $\mathbf{i}=(i_1,\dots,i_k)$ is a multi-index of length $k$ in $\mathbb{R}^d$ and $\omega = \sum_{\mathbf{i}} \omega_{\mathbf{i}} dx^{i_1} \wedge \dots \wedge dx^{i_k}$.
The \textbf{flat chain conjecture} asserts that if \(T\) is a metric current with compact support, then $\widetilde{T} \in \mathscr{F}_k$.

\subsection{The decomposability bundle and its $k$-dimensional analogue}

A key tool in the study of differentiability of Lipschitz functions with respect to measures is the \textbf{decomposability bundle} $V(\mu, x)$, introduced by Alberti and the author in \cite{AlbertiMarchese2016}.

\begin{definition}[Decomposability bundle, see Section 6.1 of \cite{AlbertiMarchese2016}]
    For a Radon measure \(\mu\) on $\mathbb{R}^d$, the \textbf{decomposability bundle} \(V(\mu, \cdot)\) is a Borel map whose values are vector spaces, defined in such a way that for $\mu$-a.e. point $x$ a vector \(v \in V(\mu, x)\) if and only if there exists a 1-normal current \(N\) with \(\partial N = 0\) such that
    \[
    \lim_{r \to 0} \frac{\mathbb{M}((N - v\mu) \llcorner B(x,r))}{\mu(B(x,r))} = 0,
    \]
    where $\llcorner$ denotes the restriction of the current to the ball.
\end{definition}
This bundle characterizes the directions in which Lipschitz functions are $\mu$-a.e. differentiable. Its generalization to $k$-vectors is natural. Let $\Lambda_k(\mathbb{R}^d)$ denote the space of $k$-vectors on $\mathbb{R}^d$.

\begin{definition}[$k$-Tangent bundle, see Definition 4.1 of \cite{AlbertiMarchese2023}]
    For a Radon measure \(\mu\) on $\mathbb{R}^d$, the \textbf{$k$-tangent bundle} \(V_k(\mu, \cdot)\) is Borel map whose values are vector subspaces of $\Lambda_k(\mathbb{R}^d)$, defined in such a way that for $\mu$-a.e. point $x$ a $k$-vector \(v \in V_k(\mu, x)\) if and only if there exists a $k$-normal current \(N\) with \(\partial N = 0\) such that
    \[
    \lim_{r \to 0} \frac{\mathbb{M}((N - v\mu) \llcorner B(x,r))}{\mu(B(x,r))} = 0.
    \]
\end{definition}

It follows from Theorem \ref{t:str_flat} that this bundle characterizes flat chains, as shown in \cite{AlbertiMarchese2023}:
\begin{theorem}[Characterization of flat chains, see Theorem 1.2 of \cite{AlbertiMarchese2023}]\label{t:char_flat}
Let $\tau \in L^1(\mu; \Lambda_k(\mathbb{R}^d))$ be a $k$-vector field. Then the current $T = \tau \mu$ is a flat chain ($T \in \mathscr{F}_k$) if and only if $\tau(x) \in V_k(\mu, x)$ for $\mu$-a.e. $x$.
\end{theorem}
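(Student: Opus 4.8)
The plan is to prove the two implications separately, restricting harmlessly throughout to the set $\{|\tau|>0\}$ (where $\tau=0$ the current vanishes and $0\in V_k(\mu,x)$ trivially). For the necessity direction, $T=\tau\mu\in\mathscr{F}_k$ with $\mathbb{M}(T)<\infty$, I would invoke the structure theorem: applying Theorem \ref{t:str_flat} with, say, $\varepsilon=1$ produces a normal cycle $T'$ with $\partial T'=0$ and a Borel set $E$ such that $T=T'\llcorner E$. I claim that the single current $N=T'$ witnesses $\tau(x)\in V_k(\mu,x)$ at $\mu$-a.e. $x$. To verify the defining limit I would write $T'=T'\llcorner E+T'\llcorner E^c=\tau\mu+T'\llcorner E^c$ and use subadditivity of mass to get
\[
\mathbb{M}\big((T'-\tau(x)\mu)\llcorner B(x,r)\big)\le \int_{B(x,r)}|\tau-\tau(x)|\,d\mu+\|T'\|\big(E^c\cap B(x,r)\big).
\]
Dividing by $\mu(B(x,r))$, the first term tends to $0$ at every $\mu$-Lebesgue point of $\tau$, while the second tends to $0$ because $\mu$ is carried by $E$ (since $\|T\|=\|T'\|\llcorner E$ on $\{|\tau|>0\}$) whereas $\|T'\|\llcorner E^c$ is carried by $E^c$; the two are mutually singular, so the Besicovitch differentiation theorem forces the ratio to $0$ for $\mu$-a.e. $x$. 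This gives $\tau(x)\in V_k(\mu,x)$ a.e.

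For the sufficiency direction I would argue directly by covering and slicing, without further appeal to Theorem \ref{t:str_flat}. Fix $\varepsilon>0$. For $\mu$-a.e. $x$ the definition of $V_k$ supplies a normal cycle $N_x$ with $\mathbb{M}((N_x-\tau(x)\mu)\llcorner B(x,r))\le\varepsilon\,\mu(B(x,r))$ for all small $r$; approximate continuity gives $\int_{B(x,r)}|\tau-\tau(x)|\,d\mu\le\varepsilon\,\mu(B(x,r))$ for small $r$; and the slicing inequality $\int_0^\infty\mathbb{M}(\langle N_x,|\cdot-x|,r\rangle)\,dr\le\mathbb{M}(N_x)<\infty$ shows that almost every radius $r$ makes $N_x\llcorner B(x,r)$ a genuine \emph{normal} current. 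The balls satisfying all three conditions form a fine cover of $\mu$-almost every point, so the Vitali covering theorem extracts a countable disjoint subfamily $\{B_i=B(x_i,r_i)\}$ with $\mu(\mathbb{R}^d\setminus\bigcup_i B_i)=0$. Setting $T_\varepsilon=\sum_i N_{x_i}\llcorner B_i$, the triangle inequality on each $B_i$ yields $\mathbb{M}((T-N_{x_i})\llcorner B_i)\le 2\varepsilon\,\mu(B_i)$, hence $\mathbb{M}(T-T_\varepsilon)\le 2\varepsilon\,\mu(\mathbb{R}^d)$; since $\sum_i\mathbb{M}(N_{x_i}\llcorner B_i)<\infty$ (comparing $|\tau(x_i)|$ with the average of $|\tau|$ on $B_i$), $T_\varepsilon$ is a mass-convergent sum of normal currents and therefore a flat chain. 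Letting $\varepsilon\to0$ exhibits $T$ as a mass, hence flat, limit of flat chains, so $T\in\mathscr{F}_k$.

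The necessity direction is essentially forced once Theorem \ref{t:str_flat} is available, so the hard part will be sufficiency, and there the delicate point is to produce approximants that are \emph{bona fide} flat chains of controlled total mass rather than mere flat-small perturbations. Concretely, the $V_k$-estimate, the Lebesgue-point estimate, and the slicing-good radii (an a.e.-$r$ condition) must all hold on the \emph{same} fine family of balls; I expect the main work to be checking that the good radii still accumulate at $0$ after imposing the slicing condition and that the disjoint Vitali family captures $\mu$-almost all of the mass, so that the pieces $N_{x_i}\llcorner B_i$ have summable masses and $T_\varepsilon$ converges to $T$ in mass. Everything else reduces to standard differentiation theory for Radon measures.
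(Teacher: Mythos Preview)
The survey does not give its own proof of this theorem; it records the statement from \cite{AlbertiMarchese2023} and remarks only that it ``follows from Theorem~\ref{t:str_flat}''. Your necessity argument does precisely what that hint suggests and is correct: the single normal cycle $T'$ furnished by Theorem~\ref{t:str_flat} witnesses $\tau(x)\in V_k(\mu,x)$ at every $\mu$-Lebesgue point of $\tau$ in $\{|\tau|>0\}$, via the Besicovitch differentiation step you describe.

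Your sufficiency argument is also essentially sound, and the Vitali/slicing scheme is the natural direct construction. The one point that needs attention is the appearance of $\mu(\mathbb{R}^d)$ in your error bound $\mathbb{M}(T-T_\varepsilon)\le 2\varepsilon\,\mu(\mathbb{R}^d)$ and in the summability estimate for $\sum_i\mathbb{M}(N_{x_i}\llcorner B_i)$: nothing in the hypotheses forces $\mu$ to be finite, only $|\tau|\mu$. The easy fix is to normalize at the outset, replacing $(\mu,\tau)$ by $(|\tau|\mu,\,\tau/|\tau|)$ on $\{|\tau|>0\}$ and checking (from the definition and the Lebesgue-point reasoning you already use) that $V_k(\mu,x)=V_k(|\tau|\mu,x)$ at $\mu$-a.e.\ $x$ with $|\tau(x)|>0$; equivalently, impose the slightly sharper ball condition $\mathbb{M}((N_x-\tau(x)\mu)\llcorner B(x,r))\le\varepsilon\int_{B(x,r)}|\tau|\,d\mu$, which still holds for all small $r$ at Lebesgue points of $|\tau|$ with $|\tau(x)|>0$ and makes every sum controlled by $\mathbb{M}(T)$ rather than by $\mu(\mathbb{R}^d)$. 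With that adjustment the argument goes through as written.
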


The connection to metric currents is established in the following result, which links the continuity property of a metric current to the geometry of the decomposability bundle of its mass measure. We refer to Section 5.8 of \cite{AlbertiMarchese2016} for the definition of span of a $k$-vector.

\begin{theorem}[Metric currents and the decomposability bundle, see Theorem 5.8 of \cite{AlbertiBateMarchese2025}]
Let $T$ be a metric $k$-current on $\mathbb{R}^d$ with compact support and let $\mu$ be its mass measure. Then there exists a bounded $k$-vector field $\tau$ such that $\operatorname{span}(\tau(x)) \subset V(\mu, x)$ for $\mu$-a.e. $x$ and
\[
T(f, \pi_1, \dots, \pi_k) = \int f \langle \tau, d_{V}\pi_1 \wedge \dots \wedge d_{V}\pi_k \rangle d\mu,
\]
where $d_V$ denotes the differential relative to the subspace $V(\mu,x)$\footnote{The fact that Lipschitz functions admit such differential $\mu$-a.e. is part of the main result of \cite{AlbertiMarchese2016}.}. Conversely, if $\tau(x) \in V_k(\mu,x)$ for $\mu$-a.e. $x$, then the above formula defines a metric current.
\end{theorem}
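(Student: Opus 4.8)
The plan is to treat the two implications separately: the representation comes from the finite-mass axiom, the geometric constraint comes from the continuity axiom, and one then checks that this constraint is exactly what is needed to run the converse.

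For the direct implication I would first freeze the form-part and exploit condition~(3): for fixed $f$ the map $(\pi_1,\dots,\pi_k)\mapsto T(f,\pi_1,\dots,\pi_k)$ is multilinear, alternating, and bounded by $\prod_i\operatorname{Lip}(\pi_i)\int|f|\,d\mu$. Testing on the coordinate functions and invoking the Ambrosio--Kirchheim representation theorem (the Riesz-type argument underlying $\widetilde T=\tau\mu$) produces a $k$-vector field $\tau\in L^\infty(\mu;\Lambda_k(\mathbb{R}^d))$ with $T(f,\pi_1,\dots,\pi_k)=\int f\,\langle\tau,d\pi_1\wedge\dots\wedge d\pi_k\rangle\,d\mu$ whenever the $\pi_i$ are smooth. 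The real work is to pass from smooth to Lipschitz $\pi_i$ and simultaneously to replace the classical differential (which need not exist $\mu$-a.e.\ when $\mu$ is singular) by $d_V$. Here I would approximate each $\pi_i$ by mollifications $\pi_i^n\to\pi_i$ converging pointwise with $\operatorname{Lip}(\pi_i^n)\le\operatorname{Lip}(\pi_i)$, so that continuity forces $T(f,\pi_1^n,\dots)\to T(f,\pi_1,\dots)$; by the Alberti--Marchese differentiability theorem the restrictions $(d\pi_i^n)|_{V(\mu,x)}$ converge $\mu$-a.e.\ to $d_V\pi_i(x)$, whereas the transverse components $(d\pi_i^n)|_{V(\mu,x)^\perp}$ may oscillate without converging.

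This dichotomy is where I expect the main obstacle to lie, and it is also the source of the span condition. Since the left-hand side converges while the transverse components of the differentials do not, the pairing of $\tau$ with those transverse components must drop out in the limit. Making this rigorous is the heart of the argument: using the sharp half of the Alberti--Marchese theory --- the existence, for $\mu$-a.e.\ $x$ and every direction $v\notin V(\mu,x)$, of uniformly Lipschitz functions tending to $0$ pointwise yet carrying a prescribed infinitesimal oscillation along $v$ --- one inserts such functions as perturbations of the $\pi_i$. Continuity keeps $T$ unchanged, while the representation formula registers a contribution proportional to the components of $\tau$ lying in $\Lambda_k\bigl(V(\mu,x)^\perp\bigr)$ or in the mixed spaces; forcing this to vanish shows that only the $\Lambda_k(V(\mu,x))$-part of $\tau$ survives. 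Projecting $\tau$ onto $\Lambda_k(V(\mu,x))$ thus leaves the formula unchanged and yields a field with $\operatorname{span}(\tau(x))\subset V(\mu,x)$, and the representation with $d_V$ follows from the $\mu$-a.e.\ convergence of the $V$-restricted differentials together with dominated convergence.

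For the converse I would verify the three axioms for the functional defined by the formula. Multilinearity and alternation are built in, the mass bound with measure $|\tau|\,\mu$ is immediate from $|\langle\tau,d_V\pi_1\wedge\dots\wedge d_V\pi_k\rangle|\le|\tau|\prod_i\operatorname{Lip}(\pi_i)$ and $\tau\in L^\infty$, and locality holds because $d_V\pi_i$ vanishes wherever $\pi_i$ is locally constant. The only substantive point is continuity under pointwise convergence with equibounded Lipschitz constants, and this is precisely where the stronger hypothesis $\tau(x)\in V_k(\mu,x)$ --- rather than merely $\operatorname{span}(\tau(x))\subset V(\mu,x)$ --- enters: by definition $\tau\mu$ agrees, at $\mu$-almost every point and to first order, with the restriction of a divergence-free $k$-normal current, and the metric current induced by a normal current is continuous in the required sense by the Ambrosio--Kirchheim theory, so the desired continuity is inherited after a localization and a density-point argument. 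I would emphasize that the gap between the condition produced by the direct implication, $\tau\in\Lambda_k(V(\mu,\cdot))$, and the one required by the converse, $\tau\in V_k(\mu,\cdot)$, is not an artifact of the proof: the two bundles coincide for $k=1$ and $k=d$, and their equality in intermediate dimensions is exactly the content of the flat chain conjecture through Theorem~\ref{t:char_flat}.
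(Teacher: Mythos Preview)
The survey does not contain a proof, or even a sketch, of this theorem: it is quoted verbatim from \cite{AlbertiBateMarchese2025} and used only as background connecting metric currents to the bundles $V(\mu,\cdot)$ and $V_k(\mu,\cdot)$. There is therefore no in-paper argument against which to compare your proposal.

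On its own merits, your outline follows the natural line and captures the key mechanisms: the Riesz-type representation $T=\tau\mu$ from the mass axiom, and the use of the sharp (non-differentiability) half of the Alberti--Marchese theory to force the components of $\tau$ transverse to $V(\mu,\cdot)$ to vanish via the continuity axiom. One place deserving more care is the converse. Your passage from ``$\tau\mu$ agrees to first order at $\mu$-a.e.\ point with a closed normal current'' to the global continuity axiom is delicate, because the definition of $V_k(\mu,x)$ is pointwise and does not by itself produce a single normal current, or even a sequence, approximating $\tau\mu$ globally. A cleaner route, available within the survey itself, is to combine Theorem~\ref{t:char_flat} (so that $\tau\mu\in\mathscr{F}_k$) with Theorem~\ref{t:str_flat} to write $\tau\mu$ as the Borel restriction of a single normal current; since normal currents induce metric currents and restriction to Borel sets preserves the metric-current axioms in the Ambrosio--Kirchheim theory, continuity follows without any density-point localization. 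Your closing observation --- that the forward direction only yields $\tau(x)\in\Lambda_k(V(\mu,x))$ while the converse needs the a priori smaller space $V_k(\mu,x)$, and that the coincidence of the two is equivalent to the flat chain conjecture --- is exactly the conceptual point the survey is building towards.
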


\section{A new proof of the 1-dimensional case}

The first proof of the 1-dimensional conjecture was given by Schioppa \cite{Schioppa2016}, using Alberti representations and width functions. In \cite{MarcheseMerlo2024}, Merlo and the author provide a new proof which bypasses such notions and instead uses only \textbf{Poincaré’s lemma} and elementary functional-analytic arguments.

We first need two definitions central to this new approach.

\begin{definition}[Orthogonal bundle, see Definition 3.1 of \cite{MarcheseMerlo2024}]
    For a subspace \(V \subset \Lambda_k(\mathbb{R}^d)\), define its \textbf{mass-orthogonal complement} as
    \[
    V^\perp = \left\{ \tau \in \Lambda_k(\mathbb{R}^d) : \|\tau\| \leq \|\tau + \sigma\| \text{ for all } \sigma \in V \right\},
    \]
    where $\|\cdot\|$ denotes the mass norm on $k$-vectors.
\end{definition}

\begin{definition}[Purely non-flat current, see Definition 3.1 of  \cite{MarcheseMerlo2024}]
    A current $T = \tau \mu$ is \textbf{purely non-flat} if $\tau(x) \in V_k(\mu, x)^\perp$ for $\mu$-a.e. $x$.
\end{definition}

For such currents, the flat norm is determined by the action on closed forms and coincides with the mass, a result based on an application of the Hahn-Banach theorem.

\begin{theorem}[Flat norm of purely non-flat currents, see Proposition 3.2 and Proposition 3.3 of \cite{MarcheseMerlo2024}]\label{t:pnf}
    If \(T\) is a purely non-flat \(k\)-current with compact support, then
    \[
    \mathbb{F}(T) = \mathbb{M}(T) = \mathbb{F}_0(T),
    \]
    where \(\mathbb{F}_0(T) = \sup \left\{ \langle T, \omega \rangle : \|\omega\|_{\infty} \leq 1,\ d\omega = 0 \right\}\) is the \textbf{closed flat seminorm}.
\end{theorem}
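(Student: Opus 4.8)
The plan is to reduce both equalities to a single distance formula and then use the pure non-flatness of $T$ to locate the minimizer. First I would record the trivial chain $\mathbb{F}_0(T)\le\mathbb{F}(T)\le\mathbb{M}(T)$, which is immediate from the nesting of the constraint sets $\{d\omega=0,\ \|\omega\|_\infty\le1\}\subset\{\|d\omega\|_\infty\le1,\ \|\omega\|_\infty\le1\}\subset\{\|\omega\|_\infty\le1\}$ defining the three quantities. It therefore suffices to prove the single reverse inequality $\mathbb{M}(T)\le\mathbb{F}_0(T)$, after which the entire chain collapses to equalities.

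The core of the argument is the duality formula
\[
\mathbb{F}_0(T)=\inf\{\mathbb{M}(T-N):N\in\mathscr{N}_k,\ \partial N=0,\ \operatorname{supp}N\ \text{compact}\}.
\]
The easy half, namely $\mathbb{F}_0(T)\le\mathbb{M}(T-N)$ for every such cycle $N$, rests on Poincaré's lemma: since $\mathbb{R}^d$ is contractible, the cone construction writes every compactly supported normal cycle as a boundary $N=\partial Q$ with $Q$ normal, whence $\langle N,\omega\rangle=\langle\partial Q,\omega\rangle=\langle Q,d\omega\rangle=0$ for every closed form $\omega$. Thus $\langle T,\omega\rangle=\langle T-N,\omega\rangle\le\mathbb{M}(T-N)$ for admissible $\omega$, and taking the supremum over closed $\omega$ with $\|\omega\|_\infty\le1$ gives the bound. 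The reverse inequality is the Hahn–Banach step: the annihilator, among comass-bounded forms, of the space of compactly supported normal cycles is exactly the space of closed forms, because $\langle\partial Q,\omega\rangle=\langle Q,d\omega\rangle$ vanishes for all normal $Q$ if and only if $d\omega=0$. The standard distance-to-a-subspace duality then identifies $\inf_N\mathbb{M}(T-N)$ with the supremum of $\langle T,\omega\rangle$ over closed forms in the comass unit ball, that is, with $\mathbb{F}_0(T)$.

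With the duality in hand, pure non-flatness is exactly what forces the infimum to be attained at $N=0$. Given any competitor cycle $N$, I would decompose it with respect to the mass measure $\mu$ as $N=\rho\mu+N_s$ with $N_s\perp\mu$. At $\mu$-a.e.\ point $x$, Besicovitch differentiation shows that the singular part contributes nothing to the rescaled masses while the absolutely continuous part converges to $\rho(x)$, so $\rho(x)\in V_k(\mu,x)$ by the very definition of the $k$-tangent bundle. Since $V_k(\mu,x)$ is a subspace and $\tau(x)\in V_k(\mu,x)^\perp$, the definition of the mass-orthogonal complement applied to $\sigma=-\rho(x)$ yields $\|\tau(x)\|\le\|\tau(x)-\rho(x)\|$ for $\mu$-a.e.\ $x$. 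Integrating and adding the nonnegative singular mass gives $\mathbb{M}(T-N)=\int\|\tau-\rho\|\,d\mu+\mathbb{M}(N_s)\ge\int\|\tau\|\,d\mu=\mathbb{M}(T)$. Hence $\inf_N\mathbb{M}(T-N)=\mathbb{M}(T)$, so $\mathbb{F}_0(T)=\mathbb{M}(T)$, and the trivial chain then forces $\mathbb{F}(T)=\mathbb{M}(T)$ as well.

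The hard part will be the Hahn–Banach half of the duality formula. Abstract separation produces only a bounded linear functional vanishing on cycles, and turning this into a genuine smooth closed form obeying the comass bound requires care: the mass norm is not the dual of a reflexive space, so one must work in an appropriate predual of currents and then mollify, checking that mollification preserves both closedness and the comass constraint and does not disturb the pairing with the compactly supported $T$. A secondary technical point is the rigorous verification that the $\mu$-density of an arbitrary normal cycle lies in $V_k(\mu,\cdot)$ almost everywhere, which needs the differentiation theorem together with the vanishing of the singular part along the defining limit.
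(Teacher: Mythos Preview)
Your argument is correct and the pointwise core---decomposing the competitor with respect to $\mu$, showing its density lands in $V_k(\mu,\cdot)$, and using $\|\tau\|\le\|\tau-\rho\|$ from the definition of $V_k^\perp$---is exactly the mechanism the paper uses. The organization, however, differs in two small but genuine ways.

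First, the paper separates the two equalities: it proves $\mathbb{M}(T)\le\mathbb{F}(T)$ by taking a flat decomposition $T=R+\partial S$ and bounding $\mathbb{M}(R)$ from below, and only afterwards invokes Hahn--Banach to obtain $\mathbb{F}(T)=\mathbb{F}_0(T)$. You instead prove a single duality $\mathbb{F}_0(T)=\inf_N\mathbb{M}(T-N)$ over normal cycles and then locate the minimizer at $N=0$; this collapses the two steps into one and makes the role of Poincar\'e's lemma (cycles are boundaries in $\mathbb{R}^d$) explicit on the ``easy'' side. Second, to see that the competitor's density lies in $V_k(\mu,\cdot)$, the paper appeals to Theorem~\ref{t:char_flat} applied to the flat chain $\partial S$, whereas you read it off directly from the definition of the $k$-tangent bundle applied to the normal cycle $N$ via Besicovitch differentiation. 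Your route is marginally more self-contained in that it does not call on the characterization theorem; the paper's route has the advantage that the Hahn--Banach step (for $\mathbb{F}=\mathbb{F}_0$) is the more classical one and sidesteps the predual/mollification issue you correctly flag for the distance formula.
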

\begin{proof}[Proof Sketch]
The inequality $\mathbb{F}(T) \leq \mathbb{M}(T)$ is always true. Toward a proof of the inequality $\mathbb{M}(T) \leq \mathbb{F}(T)$, one assumes by contradiction that there exists a decomposition $T = R + \partial S$ with $\mathbb{M}(R) + \mathbb{M}(S) < \mathbb{M}(T)$. Using the fact that $T$ is purely non-flat and Theorem \ref{t:char_flat}, one can show that the part of $R$ orthogonal to $V_k(\mu,x)$ must have mass at least $\mathbb{M}(T)$, leading to a contradiction. The equality $\mathbb{F}(T) = \mathbb{F}_0(T)$ follows from a Hahn-Banach argument.
\end{proof}

\subsection{Sketch of the proof of the 1D conjecture}

\begin{enumerate}
    \item Assume by contradiction that there exists a metric 1-current \(T\) such that \(\widetilde{T}\) is not a flat chain.
    \item Decompose \(\widetilde{T}\) into a flat part and a purely non-flat part $T_n$ using the projection onto $V_1(\mu, x)$.
    \item By Theorem \ref{t:pnf}, $\mathbb{F}(T_n) = \mathbb{M}(T_n) > 0$.
     \item Translate $T_n$ by a vector $v$. Since the mass measure of $T_n$ is singular wrt. Lebesgue, then for a.e. $v$, the translated current $\tau_{v\sharp}T_n$ and $T_n$ are mutually singular, so that $\mathbb{M}(T_n - \tau_{v\sharp}T_n) = 2\mathbb{M}(T_n)$.
    \item For any closed 1-form \(\omega\) with \(\|\omega\| \leq 1\), by Poincaré’s lemma, there exists a Lipschitz function $\pi$ such that $d\pi = \omega$ and \(\text{Lip}(\pi) \leq 1\). Apply this to the forms $\omega_v$ realizing the closed flat seminorm of $T_n - \tau_{v\sharp}T_n$ obtaining corresponding functions $\pi_v$ which, up to subsequences, converge to a function $\pi$ by Arzelà-Ascoli.
    \item Again by Theorem \ref{t:pnf}, $T_n(1,\pi_v) - T_n(1,\pi_v\circ\tau_v) = \mathbb{F}_0(T_n - \tau_{v\sharp}T_n)=\mathbb{M}(T_n - \tau_{v\sharp}T_n) = 2\mathbb{M}(T_n)$.
    \item However, by the continuity axiom of metric currents, the action of $T_n$ on $(1,\pi_v)$ and $(1,\pi_v \circ \tau_v)$ must be close if $v$ is small, leading to a contradiction with the previous point.
\end{enumerate}

This proof reveals that the higher-dimensional case would follow if one could replace Poincaré's lemma by a suitable $L^\infty$-to-Lipschitz estimate for the PDE $d\phi = \omega$ for $k$-forms: a known challenge due to the failure of Schauder estimates for continuous data.

\section{A Lusin-type theorem and the full conjecture}

The classical Lusin theorem states that a measurable function coincides with a continuous one outside of a set of arbitrarily small measure. Alberti \cite{Alberti1991} proved a deep analogue for gradients: any bounded vector field $f$ coincides with the gradient of a $C^1$ function outside an arbitrarily small set. This was later extended to general measures by the author and Schioppa \cite{MarcheseSchioppa2019}.

In \cite{DeMasiMarchese2025}, De Masi and the author proved a refined version where the estimate does not degenerate on the small set if the vector field is orthogonal to the decomposability bundle.

\begin{theorem}[Refined Lusin theorem for gradients, see Theorem 1.1 of \cite{DeMasiMarchese2025}]
    Let \(\mu\) be a Radon measure, \(\Omega \subset \mathbb{R}^n\) open with \(\mu(\Omega) < \infty\), and \(f: \Omega \to \mathbb{R}^n\) Borel with \(f(x) \in V(\mu, x)^\perp\) \(\mu\)-a.e. Then for every \(\varepsilon > 0\), there exists a \(C^1\) function \(g\) and a compact set \(K \subset \Omega\) such that:
    \begin{itemize}
        \item \(\mu(\Omega \setminus K) < \varepsilon\),
        \item \(Dg = f\) on \(K\),
        \item \(\|Dg\|_{L^p(\mu)} \leq (1+\varepsilon)\|f\|_{L^p(\mu)}\) for all $p \in [1, \infty]$.
    \end{itemize}
\end{theorem}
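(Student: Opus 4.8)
The plan is to separate the statement into a ``soft'' part---membership of $f$ in the $L^p(\mu)$-closure of gradients, which is the only place the orthogonality hypothesis enters---and a ``hard'' part---upgrading this to exact equality on a compact set with the sharp norm bound. Since $k=1$, the mass norm on $\Lambda_1(\mathbb R^n)\cong\mathbb R^n$ is Euclidean, so $V(\mu,x)^\perp$ is the ordinary orthogonal complement and the hypothesis $f(x)\in V(\mu,x)^\perp$ asserts precisely that the $1$-current $f\mu$ is purely non-flat. First I would reduce to bounded $f$ by truncation and, in order to produce a single $g$ and $K$ valid for every $p$, split $f$ along the dyadic level sets $A_j=\{2^j\le|f|<2^{j+1}\}$; each $f\mathbf 1_{A_j}$ again takes values in $V^\perp$, so it suffices to treat each piece and superpose.

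The core is a Hahn--Banach duality, in the spirit of the argument behind Theorem~\ref{t:pnf}. Let $A=\overline{\{Dg:g\in C^1\}}^{\,L^p(\mu)}$. A field $\phi\in L^q(\mu)$ lies in the annihilator $A^\circ$ iff $\int\langle Dg,\phi\rangle\,d\mu=0$ for all $g$, that is iff $\operatorname{div}(\phi\mu)=0$; such $\phi\mu$ is a divergence-free normal $1$-current, so by the very definition of the decomposability bundle $\phi(x)\in V(\mu,x)$ for $\mu$-a.e.\ $x$. Hence every $\phi\in A^\circ$ is a section of $V$, and since $f(x)\in V(\mu,x)^\perp$ gives $\langle f,\phi\rangle=0$ pointwise, we get $f\in{}^\circ(A^\circ)=A$. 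Thus there are $g_j\in C^1$ with $Dg_j\to f$ in $L^p(\mu)$ and $\|Dg_j\|_{L^p(\mu)}\to\|f\|_{L^p(\mu)}$: this is exactly the non-degenerate bound in approximate form (the endpoint $p=\infty$ I would recover from the dyadic decomposition of the first step rather than from duality).

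To pass from approximation to exact equality on a large set I would invoke the classical gradient--Lusin theorem of \cite{Alberti1991,MarcheseSchioppa2019}, but fed with the norm-controlled approximants above. After extracting a subsequence with $Dg_j\to f$ $\mu$-a.e.\ and applying Egorov, the convergence is uniform on a compact $K_1$ with $\mu(\Omega\setminus K_1)$ small; the $g_j$ then serve as approximate first-order potentials, so on a slightly smaller compact $K$ the field $f$ satisfies the Whitney first-order compatibility condition, and the Whitney extension theorem yields a genuine $g\in C^1$ with $Dg=f$ exactly on $K$.

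The hard part will be keeping the gradient small on the exceptional set while doing this: bare Whitney extension controls $\|Dg\|_\infty$ only up to a dimensional constant, which would ruin the factor $(1+\varepsilon)$. This is where purely non-flatness must be used a second time. The obstruction to a cheap extension is a circulation effect---had $f$ a component along $V$, closing it up into a $C^1$ function would force a compensating gradient of size $\sim1/\mu(\Omega\setminus K)$ on the thin bad set, as already happens for the unit tangent field to a circle, which lies in $V$. The hypothesis $f\in V^\perp$ removes this: by Theorem~\ref{t:pnf} the flat norm of $f\mu$ equals its mass, so there is no hidden boundary to discharge on $\Omega\setminus K$, and the approximants of the second step can be arranged with their gradients concentrated, up to $\varepsilon$, near $K$. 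I would therefore run the extension as an iteration---at each stage correcting the residual error by a norm-controlled gradient from the duality step and absorbing the mismatch into a set of small measure---and the crux of the whole proof is to make this converge while the accumulated gradient on the bad set stays below $\varepsilon\|f\|_{L^p(\mu)}$ \emph{simultaneously} for all $p$, the dyadic splitting of the first step being what decouples the scales and the orthogonality being what makes every correction free of mass penalty.
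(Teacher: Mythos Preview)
The survey you are working from does not prove this theorem; it only states it and attributes the proof to \cite{DeMasiMarchese2025}. There is therefore no argument in the present paper to compare your proposal against, and what follows is an assessment of your sketch on its own terms.

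Your Hahn--Banach step is correct and is the right way to isolate where the orthogonality hypothesis enters. If $\phi\in L^q(\mu)$ annihilates every gradient then $\phi\mu$ is a boundaryless normal $1$-current, and at any $\mu$-Lebesgue point $x$ of $\phi$ the density condition in the definition of $V(\mu,\cdot)$ is met with $N=\phi\mu$ and $v=\phi(x)$; hence $\phi(x)\in V(\mu,x)$ $\mu$-a.e., and $f\perp V$ forces $\int\langle f,\phi\rangle\,d\mu=0$, placing $f$ in the $L^p(\mu)$-closure of gradients with the sharp norm.

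The genuine gap is the step you yourself label the ``hard part,'' and your proposal does not close it. First, the Whitney argument as written is incomplete: uniform convergence of $Dg_j$ to $f$ on $K_1$ via Egorov gives no control on the values $g_j(x)$ themselves (the increment $g_j(x)-g_j(y)$ depends on $Dg_j$ along the segment $[x,y]$, which typically leaves $K_1$), so neither the zeroth-order Whitney data nor the first-order compatibility is secured. You can of course fall back on \cite{Alberti1991,MarcheseSchioppa2019} for existence of some $(g,K)$ with $Dg=f$ on $K$, but those theorems only give $\|Dg\|_\infty\le C(n)\|f\|_\infty$, and the entire content of the present theorem is the improvement from $C(n)$ to $1+\varepsilon$. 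Second, your appeal to Theorem~\ref{t:pnf} is suggestive but not a mechanism: $\mathbb F(f\mu)=\mathbb M(f\mu)$ says that $f\mu$ admits no cheap flat decomposition $R+\partial S$, which is a statement about currents, not about where the gradient of a Whitney extension of $f|_K$ must concentrate. The absence of a circulation obstruction is necessary for the sharp bound but is not by itself a construction, and the ``iteration'' you describe---correcting residual errors by further norm-controlled gradients---is left entirely schematic, with no indication of why the corrections do not accumulate mass on the shrinking bad sets. That accumulation control is exactly the theorem, and your sketch locates it correctly but does not supply it.
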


This result is sharp. The natural next step is to generalize this from gradients (i.e., $1$-forms) to general $k$-forms.

\begin{conjecture}[Lusin for \(k\)-forms, see Conjecture 4.1 \cite{DeMasiMarchese2025}]\label{c:demama}
    Let $\omega$ be a $k$-form such that $\langle \omega(x), \tau \rangle = 0$ for all \(\tau \in V_k(\mu, x)\) and \(\mu\)-a.e. \(x\). Then for every \(\varepsilon > 0\), there exists a \(C^1\) \((k-1)\)-form \(\phi\) and a compact set $K$ such that:
    \begin{itemize}
        \item \(\mu(\Omega \setminus K) < \varepsilon\),
        \item \(d\phi = \omega\) on \(K\),
        \item \(\operatorname{Lip}(\phi_\mathbf{i}) \leq C(n) \|\omega\|_{L^\infty(\mu)}\) for all components $\phi_\mathbf{i}$ of $\phi$.
    \end{itemize}
\end{conjecture}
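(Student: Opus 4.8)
The plan is to prove the quantitative sufficiency of the annihilation hypothesis, which is the converse of an elementary necessary condition. First I would record the necessary direction: suppose $d\phi=\omega$ holds $\mu$-a.e.\ on $K$ and let $T=\tau\mu$ be a finite-mass $k$-cycle ($\partial T=0$) concentrated on $K$; then Stokes' theorem gives
\[
\langle T,\omega\rangle=\langle T,d\phi\rangle=\langle\partial T,\phi\rangle=0 .
\]
Since a finite-mass cycle is a normal current, hence a flat chain, Theorem \ref{t:char_flat} yields $\tau(x)\in V_k(\mu,x)$ for $\mu$-a.e.\ $x$, so the requirement $\langle\omega(x),\tau\rangle=0$ for all $\tau\in V_k(\mu,x)$ is exactly the infinitesimal obstruction that must vanish. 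Moreover the same computation shows that every exact form $d\psi$ annihilates all such cycles, so the residual $\omega-d\psi$ produced at any stage of a construction again annihilates $V_k$: the hypothesis is preserved along the iteration, and this is the algebraic mechanism that should prevent the sharp bound from degenerating, in analogy with the Hahn--Banach argument underlying Theorem \ref{t:pnf}.

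For the sufficiency I would adapt the iteration behind the refined Lusin theorem for gradients (the case $k=1$), replacing the scalar equation $Dg=f$ by $d\phi=\omega$. The base step is a frozen-coefficient lemma: after discarding a set of small $\mu$-measure one covers the rest by regions on which $\omega$ is uniformly close to a constant $k$-form $\omega_0$. For constant $\omega_0$ the equation $d\phi_0=\omega_0$ is solved explicitly by a $(k-1)$-form with linear coefficients, a constant-coefficient Poincaré primitive, whose components satisfy $\operatorname{Lip}((\phi_0)_\mathbf{i})\le C(n)\,\|\omega_0\|\le C(n)\,\|\omega\|_{L^\infty(\mu)}$ with a purely dimensional constant that, crucially, does not depend on the size of the region. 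One then assembles a first global approximation $\phi_1$, estimates the residual $\omega-d\phi_1$ on the set where the equation fails, and recurses, arranging the discarded sets to have total $\mu$-measure below $\varepsilon$ and the $C^1$ norms to be summable to the asserted bound.

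The main obstacle — and the reason the statement is still a conjecture — is the passage from the local primitives to a single global $C^1$ form. In contrast with the scalar gradient case, gluing the $\phi_0$'s by a partition of unity $\{\chi_j\}$ produces error terms of the form $\sum_j d\chi_j\wedge\phi_j$ whose Lipschitz norm is of order $(\mathrm{scale})^{-1}$ and therefore threatens to overwhelm the target bound $C(n)\|\omega\|_{L^\infty(\mu)}$. By the remark in the first paragraph these error terms still annihilate $V_k$, so they ought to be absorbable into the discarded set $\Omega\setminus K$ rather than into the Lipschitz constant of $\phi$; turning this heuristic into a quantitative estimate, uniformly over the iteration and with $C(n)$ independent of the number of steps, is precisely where the coupling among the components of $\phi$ — which is invisible when $k=1$ — becomes essential and constitutes the genuine new difficulty.
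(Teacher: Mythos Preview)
The statement you are attempting to prove is labeled \emph{Conjecture} in the paper, and the paper does not supply a proof; it explicitly presents it as the open $k$-form analogue of the refined Lusin theorem for gradients, and only argues (in the paragraph following the conjecture) that its validity would imply the full flat chain conjecture. There is therefore no ``paper's own proof'' to compare your proposal against.

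Your proposal is not a proof either, and you say so yourself: the gluing step, where the partition-of-unity errors $\sum_j d\chi_j\wedge\phi_j$ blow up like the inverse scale, is exactly the place where the argument breaks down, and your observation that these errors still annihilate $V_k$ is a heuristic, not an estimate. Absorbing a term into the exceptional set $\Omega\setminus K$ requires controlling where it is large, not merely knowing that it pairs to zero against $V_k$; the annihilation property gives no a priori smallness of $\mu\{|d\chi_j\wedge\phi_j|>\lambda\}$. This is precisely the missing $L^\infty$-to-Lipschitz regularity for $d\phi=\omega$ that the paper highlights (and that Tak\'a\v{c}'s counterexample shows fails pointwise), so your sketch correctly locates the obstruction but does not overcome it.
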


De Masi and the author showed in Section 4 of \cite{DeMasiMarchese2025} that this conjecture would imply the full flat chain conjecture. The idea is that if a metric current were not flat, its purely non-flat part would have a tangent field in $V_k(\mu, x)^\perp$. The validity of Conjecture \ref{c:demama} would then allow the construction of test forms that violate the continuity property of metric currents, similar to the argument in the 1-dimensional case.

\section{Takáč’s counterexample to Lang's conjecture}

The version of the flat chain conjecture originally considered by Lang \cite{Lang2011} did not include the finite mass assumption. In \cite{Takac2025}, Jakub Takáč constructs a counterexample to this stronger version. His construction is based on the failure of estimates for the prescribed Jacobian equation, $\det D\phi = f$, which is deeply connected to the failure of $L^\infty$-to-Lipschitz estimates for the equation $d\omega = \phi$ in higher dimensions, see \cite{Ornstein1962, McMullen1998}.

The analogy between the approaches by \cite{MarcheseMerlo2024} and \cite{DeMasiMarchese2025} and the approach by \cite{Takac2025} lies in the fact that both problems require constructing a function with controlled Lipschitz constant whose derivative matches a given field or form on a large set. Takáč's result shows that this is impossible in general. This is in contrast with the Lusin-type conjecture of \cite{DeMasiMarchese2025} and underlines the fact that the finite mass assumption provides additional \emph{flexibility}, making the critical $L^\infty$-to-Lipschitz estimate potentially hold in a measure theoretic sense rather than pointwise. 

Although Tak\'a\v{c}'s preprint \cite{Takac2025} appeared after \cite{MarcheseMerlo2024}, the fundamental connection between the flat chain conjecture and PDE regularity issues was developed independently through both lines of investigation and with different techniques.

\section{Recent related work and final comments}

The structure of 1-dimensional metric currents has been further analyzed in three very recent preprints, which improve our understanding of the 1-dimensional case and open new avenues for exploring the structure of metric currents.

In \cite{AmbReVi} it is proven that every locally normal metric 1-current (in the sense of Lang and Wenger) can be written as a superposition of curves with locally finite length, generalizing a result by Smirnov in the Euclidean setting, see \cite{Smi} and by Paolini and Stepanv in the metric setting, see \cite{PaoSte}.

In \cite{BateEtAl2025} and \cite{ArroyoRabasaBouchitte2025} it is proven that every metric 1-current with finite mass can be approximated in mass by a sequence of \emph{normal} 1-currents. This is obtained through a very strong analogue of Theorem \ref{t:str_flat} in the metric setting.\\

The flat chain conjecture has seen significant progress in recent months. The shift from geometric constructions (width functions) to PDE and closability arguments has proven powerful, leading to new proofs and even stronger results in dimension 1.

The full conjecture for \(k > 1\) remains open, but the works described in this survey suggest promising approaches, potentially leading either to a complete proof or to further counterexamples: a Lusin-type theorem for \(k\)-forms.

\bibliographystyle{alpha}
\bibliography{references}

\end{document}